\documentclass[10pt,twoside]{article}

\usepackage{amsmath, amsthm, amssymb, mathrsfs, sansmath, setspace}

\newcommand{\ds}	{\displaystyle}

\newtheorem{theorem}{Theorem}
\newtheorem{lemma}{Lemma}
\newtheorem{proposition}{Proposition}
\newtheorem{remark}{Remark}

\onehalfspacing

\title{Uniqueness of Conformal Ricci Flow using Energy Methods}
\author{Thomas Bell}

\begin{document}

\maketitle

\begin{abstract}
We analyze an energy functional associated to Conformal Ricci Flow along closed manifolds with constant negative scalar curvature. Given initial conditions we use this functional to demonstrate the uniqueness of both the metric and the pressure function along Conformal Ricci Flow.
\end{abstract}

\section{Introduction}

The uniqueness of Ricci Flow on closed manifolds was originally proven by Hamilton \cite{Ha1}. Later on, Chen and Zhu proved the uniqueness on complete noncompact manifolds with bounded curvature \cite{CZ}. Both proofs utilize DeTurck Ricci Flow. Recently Kotschwar used energy techniques to give another proof of the uniqueness on complete manifolds \cite{Ko}. Kotchwar's proof does not rely on DeTurck Ricci Flow. A natural question is whether similar techniques can be applied to demonstrate the uniqueness of other geometric flows. One of these flows we have in mind is Conformal Ricci Flow, introduced by Fischer \cite{Fi}. Conformal Ricci Flow is, like Ricci Flow, a weakly parabolic flow of the metric on manifolds. Unlike Ricci Flow, Conformal Ricci Flow is restricted to the class of metrics of constant scalar curvature.

Let $(M^n,g_0)$ be a smooth n-dimensional Riemannian manifold with a metric $g_0$ of constant scalar curvature $s_0$. Conformal Ricci Flow on $M$ is defined as follows:
\begin{equation}\label{CRFequation}
\left\{\begin{tabular}{ccl}$\ds\frac{\partial g}{\partial t}$&$=$&$-2\text{Ric}_{g(t)}+2\frac{s_0}{n}g(t)-2p(t)g(t)$\\$s\bigl(g(t)\bigr)$&$=$&$s_0$\end{tabular}\right. \text{on}~M\times [0,T].
\end{equation}

Here $g(t),~t\in [0,T]$, is a family of metrics on $M$ with $g(0)=g_0$, $s\bigl(g(t)\bigr)$ is the scalar curvature of $g(t)$, and $p(t),~t\in[0,T]$, is a family of functions on $M$. In \cite{Fi} and \cite{LQZ} we see that (\ref{CRFequation}) is equivalent to the following system:
\begin{equation}\label{equationp}
\left\{\begin{tabular}{ccl}$\ds\frac{\partial g}{\partial t}$&$=$&$-2\text{Ric}_{g(t)}+2\frac{s_0}{n}g(t)-2p(t)g(t)$\\$\bigl((n-1)\Delta_{g(t)}+s_0\bigr)p(t)$&$=$&$-\left<\text{Ric}_{g(t)}-\frac{s_0}{n}g(t),\text{Ric}_{g(t)}-\frac{s_0}{n}g(t)\right>$\end{tabular}\right.
\end{equation}

Throughout this paper we will use $V$ to denote the following symmetric $2$-tensor:
\begin{equation}\label{Eq V}
V(t)=\text{Ric}_{g(t)}-\frac{s_0}{n}g(t)+p(t)g(t)
\end{equation}

In this paper we use Kotchwar's idea to give a proof of the uniqueness of Conformal Ricci Flow for closed manifolds with metrics of constant negative scalar curvature. Such uniqueness has been observed by Lu, Qing and Zheng using DeTurck Conformal Ricci Flow \cite{LQZ}. More precisely we will prove the following uniqueness theorem of Conformal Ricci Flow:

\begin{theorem}\label{Thm Main}
Let $(M^n,g_0)$ be a closed manifold with constant negative scalar curvature $s_0$. Suppose $\bigl(g(t),p(t)\bigr)$ and $\bigl(\tilde{g}(t),\tilde{p}(t)\bigr)$ are two solutions of (\ref{CRFequation}) on $M\times[0,T]$ with $\tilde{g}(0)=g(0)$. Then $\bigl(\tilde{g}(t),\tilde{p}(t)\bigr)=\bigl(g(t),p(t)\bigr)$ for $0\leq t\leq T$.
\end{theorem}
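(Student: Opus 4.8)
The plan is to adapt Kotschwar's energy method to the coupled metric–pressure system \eqref{equationp}. Writing $h = \tilde g - g$, $A = \tilde\Gamma - \Gamma$ (the difference of Levi--Civita connections, a genuine tensor), $S = \widetilde{\text{Rm}} - \text{Rm}$, and $q = \tilde p - p$, I would form the energy
\begin{equation*}
\mathcal{E}(t) = \int_M \bigl(|h|^2 + |A|^2 + |S|^2\bigr)\,d\mu_{g(t)},
\end{equation*}
with all norms and integrals taken with respect to $g(t)$. Since $g(0) = \tilde g(0)$ we have $h(0) = A(0) = S(0) = 0$, and because at $t=0$ both pressures solve the same elliptic equation in \eqref{equationp} with the same invertible operator $(n-1)\Delta_{g_0} + s_0$, also $q(0) = 0$; hence $\mathcal{E}(0) = 0$. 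The goal is the differential inequality $\frac{d}{dt}\mathcal{E}(t) \le C\,\mathcal{E}(t)$ on $[0,T]$, from which Gr\"onwall's inequality forces $\mathcal{E} \equiv 0$, so that $\tilde g \equiv g$ and then $\tilde p \equiv p$.

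Observing $\partial_t g = -2V$, with $V$ as in \eqref{Eq V}, I would first derive the structural evolution equations for the difference quantities. Because the solutions are smooth on the compact set $M\times[0,T]$, all of $\text{Rm},\,\widetilde{\text{Rm}},\,p,\,\tilde p$ and their covariant derivatives are uniformly bounded, so writing $*$ for contraction against tensors of bounded norm the definitions differentiate to
\begin{align*}
\partial_t h &= \Phi_0, \qquad |\Phi_0| \le C\bigl(|h| + |S| + |q|\bigr), \\
\partial_t A &= \nabla S + \nabla q + \Phi_1, \\
\partial_t S &= \Delta_{g} S + \widetilde{\text{Rm}} * \nabla A + g * \nabla^2 q + \Phi_2,
\end{align*}
where $\Phi_1$ is zeroth order in the differences and $\Phi_2$ carries no derivatives of $S$, $A$, or $q$ beyond those displayed. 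The new features relative to Ricci flow are the $\nabla q$ term in $\partial_t A$ and the $g * \nabla^2 q$ term in $\partial_t S$, which arise from the conformal factor $\bigl(\tfrac{s_0}{n} - p\bigr)g$ inside $V$; these are exactly the terms that must be absorbed using the pressure.

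The key new ingredient is an elliptic estimate for $q$. Subtracting the two pressure equations in \eqref{equationp} and rearranging yields
\begin{equation*}
\bigl((n-1)\Delta_{g} + s_0\bigr)q = -\bigl(|\tilde E|^2_{\tilde g} - |E|^2_{g}\bigr) + (n-1)(\Delta_{g} - \Delta_{\tilde g})\tilde p,
\end{equation*}
where $E = \text{Ric} - \tfrac{s_0}{n}g$. The right-hand side is pointwise bounded by $C(|h| + |A| + |S|)$, since the squared-norm difference is controlled by $h$ and $S$, while the Laplacian difference $(\Delta_g - \Delta_{\tilde g})\tilde p$ is controlled by $h * \nabla^2\tilde p + A * \nabla\tilde p$. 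Because $s_0 < 0$, the operator $(n-1)\Delta_g + s_0$ is negative definite, hence invertible with uniform estimates; pairing with $q$ and integrating by parts gives
\begin{equation*}
(n-1)\|\nabla q\|_{L^2}^2 + |s_0|\,\|q\|_{L^2}^2 \le C\bigl(\|h\|_{L^2}^2 + \|A\|_{L^2}^2 + \|S\|_{L^2}^2\bigr) = C\,\mathcal{E}(t),
\end{equation*}
and standard elliptic regularity upgrades this to $\|q\|_{H^2}^2 \le C\,\mathcal{E}(t)$. This is precisely where the hypothesis of negative scalar curvature is indispensable: it slaves the pressure difference to $(h,A,S)$.

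Finally I would assemble the energy inequality. Differentiating $\mathcal{E}$ and integrating by parts, the diffusion term in $\partial_t S$ produces the good term $-2\|\nabla S\|_{L^2}^2$. The dangerous first-order coupling $2\int\langle A,\nabla S\rangle$ from $\partial_t A$, the term $\int\langle S,\widetilde{\text{Rm}}*\nabla A\rangle$ from $\partial_t S$, and the pressure contributions $\int\langle A,\nabla q\rangle$ and $\int\langle S, g*\nabla^2 q\rangle$ (the last integrated by parts once to land on $\nabla q$) are all estimated by Young's inequality as $\varepsilon\|\nabla S\|_{L^2}^2 + C\bigl(\|A\|_{L^2}^2 + \|S\|_{L^2}^2 + \|\nabla q\|_{L^2}^2 + \|q\|_{L^2}^2\bigr) \le \varepsilon\|\nabla S\|_{L^2}^2 + C\,\mathcal{E}$, invoking the elliptic bound for the last two summands. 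Choosing $\varepsilon$ small enough that the surviving coefficient of $\|\nabla S\|_{L^2}^2$ is nonpositive leaves $\frac{d}{dt}\mathcal{E} \le C\mathcal{E}$, and Gr\"onwall completes the proof. I expect the main obstacle to be the careful bookkeeping of the highest-order terms, so that every occurrence of $\nabla S$, $\nabla A$, and $\nabla^2 q$ is either the good diffusion term or is absorbed into a small multiple of $\|\nabla S\|_{L^2}^2$ plus a multiple of $\mathcal{E}$ --- in particular verifying that, once the elliptic estimate controls $q$, the conformal couplings genuinely contribute nothing worse.
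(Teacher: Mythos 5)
Your proposal is correct and follows essentially the same route as the paper: the same difference quantities $h$, $A$, $S$, $q$, the same energy $\mathcal{E}=\int_M(|h|^2+|A|^2+|S|^2)\,d\mu$, the same key elliptic estimate slaving $\|q\|_{H^2}^2$ to $\mathcal{E}$ via the negative definiteness of $(n-1)\Delta+s_0$ when $s_0<0$, and the same absorption of the first-order couplings into the dissipation term $-\|\nabla S\|_{L^2}^2$ before applying Gr\"onwall. The structural evolution equations you posit for $h$, $A$, $S$ are exactly what the paper verifies in its Propositions, so the plan fills in to a complete proof along the paper's own lines.
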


\section{The Differences between $g(t)$ and $\tilde{g}(t)$}

Let $g(t)$ and $\tilde{g}(t)$ be as in Theorem \ref{Thm Main}. We will treat $g$ as our background metric and $\tilde{g}$ as our alternative metric. Let $\nabla,\tilde{\nabla}$ be the Riemannian connections of $g$ and $\tilde{g}$ respectively. Similarly, let $R,\tilde{R}$ represent the full Riemannian curvature tensors of $g$ and $\tilde{g}$ respectively.

Let $h=g-\tilde{g}$. Let $A=\nabla-\tilde{\nabla}$. Explicitly, $A^i_{jk}=\Gamma^i_{jk}-\tilde{\Gamma}^i_{jk}$ where $\Gamma^i_{jk}$ and $\tilde{\Gamma}^i_{jk}$ are the Christoffel symbols of $\nabla$ and $\tilde{\nabla}$ respectively. Also let $S=R-\tilde{R},~q=p-\tilde{p}$.

In this section we find bounds on  $h,~A,~S,~q,~\nabla{q}$ and $\nabla\nabla q$ (see Propositions \ref{Prop hAS bounds} and \ref{Prop q bounds}). Throughout this paper we will use the convention $X\ast Y$ to denote any finite sum of tensors of the form $X\cdot Y$. We use C(X) to denote a finite sum of tensors of the form $X$.

\subsection{Preliminary Calculations}

First we calculate some useful expressions for quantities which will arise in the proofs of Propositions \ref{Prop hAS bounds} and \ref{Prop q bounds}. We calculate
\begin{equation*}
g^{ij}-\tilde{g}^{ij}=g^{ik}(\tilde{g}^{j\ell}\tilde{g}_{k\ell})-\tilde{g}^{j\ell}(g^{ik}g_{k\ell})=-g^{ik}\tilde{g}^{j\ell}h_{k\ell},
\end{equation*}

i.e.
\begin{equation*}
g^{-1}-\tilde{g}^{-1}=\tilde{g}^{-1}\ast h.
\end{equation*}

If $X$ is any tensor which is not a function we have
\begin{equation*}
\bigl(\nabla-\tilde{\nabla}\bigr)X=A\ast X.
\end{equation*}

We check this when $X$ is a $(1,1)$-tensor. Calculating in local coordinates we see
\begin{align*}
\bigl(\nabla_i-\tilde{\nabla}_i\bigr)X_j^k&=\partial_iX_j^k-\Gamma_{ij}^{\ell}X_{\ell}^k+\Gamma_{i\ell}^kX_j^{\ell}-\partial_iX_j^k+\tilde{\Gamma}_{ij}^{\ell}X_{\ell}^k-\tilde{\Gamma}_{i\ell}^kX_j^{\ell}\\
&=A_{i\ell}^kX_j^{\ell}-A_{ij}^{\ell}X_{\ell}^k= A\ast X.
\end{align*}

If $f$ is a function however, then we have the following:
\begin{equation*}
\bigl(\nabla_i-\tilde{\nabla}_i\bigr)f=\bigl(g^{ij}-\tilde{g}^{ij}\bigr)\partial_i f=-g^{ik}\tilde{g}^{j\ell}h_{k\ell}\partial_i f=-g^{ik}h_{k\ell}\tilde{\nabla}_{\ell}f,
\end{equation*}
or in other words
\begin{equation*}
\bigl(\nabla-\tilde{\nabla}\bigr)f=h\ast\tilde{\nabla}f.
\end{equation*}

We now calculate
\begin{equation*}
\nabla\tilde{g}^{-1}=\bigl(\nabla-\tilde{\nabla}\bigr)\tilde{g}^{-1}=\tilde{g}^{-1}\ast A.
\end{equation*}

The following calculation will also be important.
\begin{equation*}
\nabla_ih_{jk}=\nabla_ig_{jk}-\nabla_i\tilde{g}_{jk}=-\bigl(\nabla_i-\tilde{\nabla}_i\bigr)\tilde{g}_{jk}.
\end{equation*}

Thus we have
\begin{equation*}
\nabla h=\tilde{g}\ast A.
\end{equation*}

Now we are able to calculate the following for a function $f$.
\begin{align*}
\nabla\bigl(\nabla-\tilde{\nabla}\bigr)f&=\nabla\bigl(h\ast\tilde{\nabla}f\bigr)\\
&=\nabla h\ast\tilde{\nabla}f+h\ast\bigl(\nabla-\tilde{\nabla}\bigr)\tilde{\nabla}f+h\ast\tilde{\nabla}\tilde{\nabla}f\\
&=\tilde{g}\ast A\ast\tilde{\nabla}f+h\ast A\ast\tilde{\nabla}f+h\ast\tilde{\nabla}\tilde{\nabla}f.
\end{align*}

Now let
\begin{align}
U^a_{ijk\ell}&=g^{ab}\nabla_b\tilde{R}_{ijk\ell}-\tilde{g}^{ab}\tilde{\nabla}_b\tilde{R}_{ijk\ell}\label{U1}\\
&=g^{ab}(\nabla_b-\tilde{\nabla}_b)\tilde{R}_{ijk\ell}+\bigl(g^{ab}-\tilde{g}^{ab}\bigr)\tilde{\nabla}_b\tilde{R}_{ijk\ell}\notag\\
&=A\ast\tilde{R}+\tilde{g}^{-1}\ast h\ast\tilde{\nabla}\tilde{R}\notag,
\end{align}
and we may calculate
\begin{align*}
\nabla_a\bigl(g^{ab}\nabla_bR-\tilde{g}^{ab}\tilde{\nabla}_b\tilde{R}\bigr)&=\nabla_a\bigl(g^{ab}\nabla_b\tilde{R}-\tilde{g}^{ab}\tilde{\nabla}_b\tilde{R}\bigr)+g^{ab}\nabla_a\nabla_b\bigl(R-\tilde{R}\bigr)\notag\\
&=\operatorname{div}U+\Delta S.
\end{align*}

We summarize the above calculations in the following Lemma:
\begin{lemma}\label{Lem estimates}
Using the notation defined at the beginning of this section,
\begin{align}
&g^{-1}-\tilde{g}^{-1}=\tilde{g}^{-1}\ast h\\
&\bigl(\nabla-\tilde{\nabla}\bigr)X=A\ast X\\
&\bigl(\nabla-\tilde{\nabla}\bigr)f=h\ast\tilde{\nabla}f\label{hnablaf}\\
&\nabla\tilde{g}^{-1}=\tilde{g}^{-1}\ast A\\
&\nabla h=\tilde{g}\ast A\\
&\nabla\bigl(\nabla-\tilde{\nabla}\bigr)f=\tilde{g}\ast A\ast\tilde{\nabla}f+h\ast A\ast\tilde{\nabla}f+h\ast\tilde{\nabla}\tilde{\nabla}f\label{fnablaf}\\
&U=A\ast\tilde{R}+\tilde{g}^{-1}\ast h\ast\tilde{\nabla}\tilde{R}\label{U2}\\
&\nabla_a\bigl(g^{ab}\nabla_bR-\tilde{g}^{ab}\tilde{\nabla}_b\tilde{R}\bigr)=\operatorname{div}U+\Delta S\label{U3}
\end{align}
where $U$ is defined in (\ref{U1}).
\end{lemma}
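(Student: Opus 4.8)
The plan is to regard all eight identities as consequences of three structural facts and the paper's schematic $\ast$/$C(\cdot)$ bookkeeping. The three facts are: (i) the difference of the two Levi-Civita connections, $A=\nabla-\tilde{\nabla}$, is a genuine $(1,2)$-tensor, so that $\nabla-\tilde{\nabla}$ acting on any tensor is a zeroth-order (purely algebraic) operator; (ii) each connection annihilates its own metric and inverse metric, i.e. $\nabla g=\nabla g^{-1}=0$ and $\tilde{\nabla}\tilde{g}=\tilde{\nabla}\tilde{g}^{-1}=0$; and (iii) the Leibniz rule holds for both $\nabla$ and $\tilde{\nabla}$. Once these are in hand, every line of the lemma is obtained by feeding them into the $\ast$ convention, which silently absorbs contractions, index raisings, and fixed combinatorial constants.

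First I would record the inverse-metric identity by the purely algebraic computation $g^{ij}-\tilde{g}^{ij}=g^{ik}\tilde{g}^{j\ell}(\tilde{g}_{k\ell}-g_{k\ell})=-g^{ik}\tilde{g}^{j\ell}h_{k\ell}$, which is exactly $g^{-1}-\tilde{g}^{-1}=\tilde{g}^{-1}\ast h$ after the fixed factor $g^{-1}$ is absorbed into $\ast$. Next I would verify $(\nabla-\tilde{\nabla})X=A\ast X$: in local coordinates the partial-derivative terms cancel, leaving a sum of Christoffel differences $A$ contracted against $X$, one term for each index of $X$. It suffices to check a single tensor type (the $(1,1)$ case displayed above) and then extend to arbitrary type by the Leibniz rule.

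The remaining identities then follow immediately. For (\ref{hnablaf}) I would emphasize that $\nabla f$ denotes the \emph{gradient vector} $g^{ij}\partial_j f$: the covector $df$ is common to both connections, so the difference arises solely from raising the index with different metrics, giving $(\nabla-\tilde{\nabla})f=(g^{-1}-\tilde{g}^{-1})\ast df=h\ast\tilde{\nabla}f$ via identity (1). For $\nabla\tilde{g}^{-1}$ and $\nabla h$ I would invoke metric compatibility to rewrite $\nabla\tilde{g}^{-1}=(\nabla-\tilde{\nabla})\tilde{g}^{-1}$ and $\nabla h=-(\nabla-\tilde{\nabla})\tilde{g}$, each of which collapses to an $A$-contraction by identity (2). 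Identity (\ref{fnablaf}) comes from differentiating (\ref{hnablaf}) with the Leibniz rule and substituting $\nabla h=\tilde{g}\ast A$ together with $(\nabla-\tilde{\nabla})\tilde{\nabla}f=A\ast\tilde{\nabla}f$. Finally, the two $U$-identities follow from the add-and-subtract decompositions already displayed: splitting $g^{ab}\nabla_b\tilde{R}-\tilde{g}^{ab}\tilde{\nabla}_b\tilde{R}$ into a connection-difference piece and an inverse-metric-difference piece yields (\ref{U2}) via (1) and (2), while inserting $\pm\,g^{ab}\nabla_b\tilde{R}$ inside the argument of $\nabla_a(g^{ab}\nabla_b R-\tilde{g}^{ab}\tilde{\nabla}_b\tilde{R})$ and using $\nabla g^{-1}=0$ produces $\operatorname{div}U+\Delta S$.

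I do not expect a genuine obstacle, since the lemma is a catalogue of schematic identities rather than a quantitative estimate; the only points requiring care are notational. The main one is keeping straight that $\nabla f$ means the raised gradient, so that (\ref{hnablaf}) is nonzero even though $\nabla_i f=\tilde{\nabla}_i f$ as covectors, and ensuring the $\ast$ and $C(\cdot)$ conventions are applied consistently so that fixed metric factors, contractions, and constants are all absorbed without comment.
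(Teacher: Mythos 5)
Your proposal is correct and takes essentially the same route as the paper: the identical algebraic computation for $g^{-1}-\tilde{g}^{-1}$, the same local-coordinate check that $\nabla-\tilde{\nabla}$ acts algebraically through $A$, metric compatibility to reduce $\nabla\tilde{g}^{-1}$ and $\nabla h$ to $A$-contractions, the Leibniz expansion for (\ref{fnablaf}), and the same add-and-subtract decompositions for (\ref{U2}) and (\ref{U3}). Your explicit observation that $\nabla f$ must be read as the raised gradient (so that (\ref{hnablaf}) is not trivially zero) is precisely the convention the paper uses implicitly in its own calculation.
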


\subsection{Bounds on Time Derivatives of $h,~A$ and $S$}

In this subsection we derive bounds on the time derivatives of $h$, $A$ and $S$. In particular we will prove the following proposition. Here, as well as throughout this paper, $C$ will denote a constant dependent only upon $n$ while $N$ will denote a constant with further dependencies.

\begin{proposition}\label{Prop hAS bounds}
Let $\bigl(g(t),p(t)\bigr)$ and $\bigl(\tilde{g}(t),\tilde{p}(t)\bigr)$ be two solutions of (\ref{CRFequation}) on $M\times[0,T]$.
Using the notation defined at the beginning of this section, there exist constants $N_h,~N_A$ and $N_S$ such that
\begin{align}
\left|\frac{\partial}{\partial t}h\right|&\leq N_h|h|+C\bigl(|S|+|q|\bigr)\label{h deriv}\\
\left|\frac{\partial}{\partial t}A\right|&\leq N_A\bigl(|h|+|A|\bigr)+C\bigl(|\nabla S|+|\nabla q|\bigr)\label{A deriv}\\
\left|\frac{\partial}{\partial t}S-\Delta S-\operatorname{div}U\right|&\leq N_S\bigl(|h|+|A|+|S|+|q|\bigr)+C|\nabla\nabla q|\label{S deriv}
\end{align}
where $U$ is defined in (\ref{U1}).
\end{proposition}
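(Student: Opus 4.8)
My plan is to differentiate each difference quantity in time, substitute the flow, and organize every resulting expression so that the small quantities $S,q,\nabla S,\nabla q,\nabla\nabla q$ are isolated with coefficients built only from $g^{-1}$ or $g$. The key observation making this work is that $\partial_t g=-2V$ with $V$ as in (\ref{Eq V}); writing $\tilde V=\text{Ric}_{\tilde g}-\frac{s_0}{n}\tilde g+\tilde p\tilde g$ for the analogue on the second solution, all three left-hand sides are differences of the same expression evaluated on $g$ and on $\tilde g$. Since $M\times[0,T]$ is compact and both solutions are smooth, every quantity assembled from a single solution — $g,\tilde g$ and their inverses, $\tilde R$ and its covariant derivatives, and $p,\tilde p$ with their derivatives — is uniformly bounded, and these bounds are what the constants $N$ absorb. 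Because $|g|_g=|g^{-1}|_g=\sqrt n$, any coefficient that is a contraction against $g$ or $g^{-1}$ alone contributes only a factor of $n$, i.e. a constant $C$; this is why $S,q$ and their derivatives acquire $C$-coefficients while everything else, being a product of $h$ or $A$ with bounded data, is of type $N$. Finally, $h$ and $A$ are themselves bounded, so any quadratic term $A\ast h$ may be absorbed into $N(|h|+|A|)$.

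For (\ref{h deriv}) I would write $\frac{\partial}{\partial t}h=-2(V-\tilde V)$ and expand term by term. The Ricci difference is a contraction of curvature against the inverse metric, so Lemma \ref{Lem estimates} gives $\text{Ric}_g-\text{Ric}_{\tilde g}=g^{-1}\ast S+(g^{-1}-\tilde g^{-1})\ast\tilde R=g^{-1}\ast S+\tilde g^{-1}\ast h\ast\tilde R$, producing one $C|S|$ term and one $N_h|h|$ term. The remaining part is $-\frac{s_0}{n}h+(pg-\tilde p\tilde g)$, and the decisive algebraic move is to write $pg-\tilde p\tilde g=qg+\tilde p\,h$: keeping $q$ attached to the background metric gives $|qg|_g=\sqrt n\,|q|$, a $C|q|$ term, while $\tilde p\,h$ and $-\frac{s_0}{n}h$ are bounded by $N_h|h|$. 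Taking norms yields (\ref{h deriv}).

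For (\ref{A deriv}) I would begin from the variation of the connection under $\partial_t g=-2V$, namely $\partial_t\Gamma=g^{-1}\ast\nabla V$, whence $\frac{\partial}{\partial t}A=g^{-1}\ast\nabla V-\tilde g^{-1}\ast\tilde\nabla\tilde V$. Splitting off $(g^{-1}-\tilde g^{-1})\ast\tilde\nabla\tilde V=\tilde g^{-1}\ast h\ast\tilde\nabla\tilde V$, an $N_A|h|$ term, reduces matters to $g^{-1}\ast(\nabla V-\tilde\nabla\tilde V)$, and there I would write $\nabla V-\tilde\nabla\tilde V=\nabla(V-\tilde V)+(\nabla-\tilde\nabla)\tilde V$ with $(\nabla-\tilde\nabla)\tilde V=A\ast\tilde V$ an $N_A|A|$ term by Lemma \ref{Lem estimates}. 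It remains to differentiate the decomposition of $V-\tilde V$ obtained for (\ref{h deriv}), using $\nabla g=0$, $\nabla g^{-1}=0$, $\nabla h=\tilde g\ast A$ and $\nabla\tilde g^{-1}=\tilde g^{-1}\ast A$. Every derivative then falls on either a difference quantity or bounded data: $\nabla(g^{-1}\ast S)=g^{-1}\ast\nabla S$ and $\nabla(qg)=g\ast\nabla q$ give the $C|\nabla S|$ and $C|\nabla q|$ terms, and all other contributions are $h$ or $A$ (or $A\ast h$) times bounded tensors, hence $N_A(|h|+|A|)$. This gives (\ref{A deriv}).

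The estimate (\ref{S deriv}) is the crux. I would first derive the evolution of the full curvature tensor under Conformal Ricci Flow. Since $\partial_t g=-2\text{Ric}+\phi g$ with $\phi=\frac{2s_0}{n}-2p$, and the standard variation formula for $R$ is linear in $\partial_t g$ through both its principal part $g\ast\nabla\nabla(\partial_t g)$ and its zeroth-order part $(\partial_t g)\ast R$, I would split the evolution into the Ricci-flow part, which yields $\Delta_g R+R\ast R$ in the usual way via the second Bianchi identity, and the conformal part $\phi g$. Because $g$ is parallel, the latter contributes only $g\ast g\ast\nabla\nabla\phi=-2\,g\ast g\ast\nabla\nabla p$ together with zeroth-order terms $\phi\ast R$, and in particular no bare first derivative of $p$ survives. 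Thus $\frac{\partial}{\partial t}R=\Delta_g R+R\ast R+p\ast R+g\ast g\ast\nabla\nabla p+C(R)$ schematically, and likewise for $\tilde R$. Subtracting, the principal second-order part is handled by Lemma \ref{Lem estimates}: $\Delta_g R-\Delta_{\tilde g}\tilde R=\nabla_a(g^{ab}\nabla_bR-\tilde g^{ab}\tilde\nabla_b\tilde R)$ up to the commutator $(\nabla-\tilde\nabla)(\tilde g^{-1}\ast\tilde\nabla\tilde R)=A\ast\tilde g^{-1}\ast\tilde\nabla\tilde R$, so by (\ref{U3}) it equals $\Delta S+\operatorname{div}U$ plus an $N_S|A|$ term. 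The pressure Hessian difference I would telescope as $g\ast g\ast\nabla\nabla p-\tilde g\ast\tilde g\ast\tilde\nabla\tilde\nabla\tilde p=g\ast g\ast\nabla\nabla q+[\text{lower order}]$, using $\nabla\nabla p-\tilde\nabla\tilde\nabla\tilde p=\nabla\nabla q+(\nabla\nabla-\tilde\nabla\tilde\nabla)\tilde p$ with the second term of type $N_S(|h|+|A|)$ by (\ref{fnablaf}); this is the sole source of $C|\nabla\nabla q|$. The quadratic and zeroth-order remainders $R\ast R-\tilde R\ast\tilde R$, $p\ast R-\tilde p\ast\tilde R$ and $C(R)-C(\tilde R)$ telescope into $S$, $q$ and $h$ against bounded coefficients, giving $N_S(|h|+|A|+|S|+|q|)$, and collecting proves (\ref{S deriv}). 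I expect this final derivation to be the main obstacle: producing the curvature evolution with the conformal term treated correctly, and then marshalling $\frac{\partial}{\partial t}R-\frac{\partial}{\partial t}\tilde R$ so that the entire principal symbol collapses exactly onto $\Delta S+\operatorname{div}U$, leaving only $g\ast g\ast\nabla\nabla q$ and genuinely lower-order remainders — keeping the two connections and two inverse metrics straight in that bookkeeping is where the care lies.
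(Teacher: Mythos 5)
Your proposal is correct and follows essentially the same route as the paper: differentiate $h$, $A$ (via the Christoffel variation formula with $\partial_t g=-2V$), and $S$ (via the curvature variation formula), then use Lemma \ref{Lem estimates} --- in particular (\ref{U3}) for the principal second-order part and (\ref{fnablaf}) for the pressure Hessian difference --- to telescope everything into $h,A,S,q$ and their derivatives with coefficients bounded by compactness of $M\times[0,T]$. The differences are purely organizational: you telescope $V-\tilde{V}$ schematically where the paper writes out the index computations, and you contract Ricci against $g^{-1}$ where the paper uses the trace $S^k_{kij}$, which costs you only a harmless extra $\tilde{g}^{-1}\ast h\ast\tilde{R}$ term.
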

\begin{proof}

We start with the time derivative of $h$. By (\ref{CRFequation}) we have
\begin{align*}
\frac{\partial}{\partial t}h_{ij} &=-2(R_{ij}-\tilde{R}_{ij})+2\frac{s_0}{n}(g_{ij}-\tilde{g}_{ij})-2(p\,g_{ij}-\tilde{p}\,\tilde{g}_{ij})\\
&=-2S^k_{kij}+2\frac{s_0}{n}h_{ij}-2\bigl[(p-\tilde{p}){g}_{ij}+\tilde{p}(g_{ij}-\tilde{g}_{ij})\bigr]\\
&=-2S^k_{kij}+2\frac{s_0}{n}h_{ij}-2q\,{g}_{ij}-2\tilde{p}\,h_{ij}.
\end{align*}

Hence
\begin{equation*}
\frac{\partial}{\partial t}h=C(S)+C(s_0h)+C(q)+\tilde{p}\ast h
\end{equation*}
and
\begin{equation}\label{h3}
\left|\frac{\partial}{\partial t}h\right|\leq C\Bigl(\bigl(|s_0|+|\tilde{p}|\bigr)|h|+|S|+|q|\Bigr).
\end{equation}
This proves (\ref{h deriv}).

Recall the definition of $V$ from (\ref{Eq V}):
\begin{equation}
V(t)=\text{Ric}_{g(t)}-\frac{s_0}{n}g(t)+p(t)g(t).
\end{equation}

We may define $\tilde{V}$ similarly using our alternate metric $\tilde{g}$. Since $V$ and $\tilde{V}$ are symmetric $2$-tensors, then by \cite[p. 108]{CLN} we may calculate
\begin{equation}\label{A1}
\frac{\partial}{\partial t}A^k_{ij}=\tilde{g}^{k\ell}\bigl(\tilde{\nabla}_i\tilde{V}_{j\ell}+\tilde{\nabla}_j\tilde{V}_{i\ell}-\tilde{\nabla}_{\ell}\tilde{V}_{ij}\bigr)-g^{k\ell}\bigl(\nabla_iV_{j\ell}+\nabla_jV_{i\ell}-\nabla_{\ell}V_{ij}\bigr).
\end{equation}
We proceed to calculate
\begin{align}
&\tilde{g}^{k\ell}\tilde{\nabla}_i\tilde{V}_{j\ell}-g^{k\ell}\nabla_iV_{j\ell}\notag\\
=&\tilde{g}^{k\ell}(\tilde{\nabla}_i\tilde{R}_{j\ell})-g^{k\ell}(\nabla_iR_{j\ell})+ \tilde{g}^{k\ell}\tilde{\nabla}_i(\tilde{p}~\tilde{g}_{j\ell})-g^{k\ell}\nabla_i(p~g_{j\ell})\notag\\
=&\bigl(\tilde{g}^{k\ell}-g^{k\ell}\bigr)\tilde{\nabla}_i\tilde{R}_{j\ell}+g^{k\ell}(\tilde{\nabla}_i-\nabla_i)\tilde{R}_{j\ell}-g^{k\ell}\nabla_i(S^m_{mj\ell})+\delta^k_j\tilde{\nabla}_i\tilde{p}-\delta^k_j\nabla_ip\notag\\
=&\tilde{g}^{-1}\ast h\ast\tilde{\nabla}\tilde{R}+A\ast\tilde{R}+C(\nabla S)+h\ast\tilde{\nabla}\tilde{p}+C(\nabla q),\label{A2}
\intertext{where we have used (\ref{hnablaf}) to get the last equality. Similarly we find}
&\tilde{g}^{k\ell}\tilde{\nabla}_j\tilde{V}_{i\ell}-g^{k\ell}\nabla_jV_{i\ell}\notag\\
=&\tilde{g}^{-1}\ast h\ast\tilde{\nabla}\tilde{R}+A\ast\tilde{R}+C(\nabla S)+h\ast\tilde{\nabla}\tilde{p}+C(\nabla q).\label{A3}
\end{align}
Now we consider
\begin{align}
&-\tilde{g}^{k\ell}\tilde{\nabla}_{\ell}\tilde{V}_{ij}+g^{k\ell}\nabla_{\ell}V_{ij}\notag\\
=&\tilde{g}^{-1}\ast h\ast\tilde{\nabla}\tilde{R}+A\ast\tilde{R}+C(\nabla S)+\tilde{g}^{k\ell}\tilde{g}_{ij}\tilde{\nabla}_{\ell}\tilde{p}-g^{k\ell}g_{ij}\nabla_{\ell}p\notag\\
=&\tilde{g}^{-1}\ast h\ast\tilde{\nabla}\tilde{R}+A\ast\tilde{R}+C(\nabla S)+\bigl(\tilde{g}^{k\ell}-g^{k\ell}\bigr)\tilde{g}_{ij}\tilde{\nabla}_{\ell}\tilde{p}+g^{k\ell}\bigl(\tilde{g}_{ij}-g_{ij}\bigr)\tilde{\nabla}_{\ell}\tilde{p}\notag\\
&+g^{k\ell}g_{ij}(\tilde{\nabla}_{\ell}-\nabla_{\ell})\tilde{p}+g^{k\ell}g_{ij}\nabla_{\ell}(\tilde{p}-p)\notag\\
=&\tilde{g}^{-1}\ast h\ast\tilde{\nabla}\tilde{R}+A\ast\tilde{R}+C(\nabla S)+\tilde{g}^{-1}\ast h\ast\tilde{g}\ast\tilde{\nabla}\tilde{p}+h\ast\tilde{\nabla}\tilde{p}+C(\nabla q).\label{A4}
\end{align}

Hence by (\ref{A1}), (\ref{A2}), (\ref{A3}) and (\ref{A4}),
\begin{equation*}
\frac{\partial}{\partial t}A=\tilde{g}^{-1}\ast h\ast\tilde{\nabla}\tilde{R}+A\ast\tilde{R}+C(\nabla S)+ h\ast\tilde{\nabla}\tilde{p}+C(\nabla q)+\tilde{g}^{-1}\ast h\ast \tilde{g}\ast\tilde{\nabla}\tilde{p}
\end{equation*}
and
\begin{equation}
\left|\frac{\partial}{\partial t}A\right|\leq C\Bigl(\bigl(|\tilde{g}^{-1}||\tilde{\nabla}\tilde{R}|+|\tilde{\nabla}\tilde{p}|+|\tilde{g}^{-1}||\tilde{g}||\tilde{\nabla}\tilde{p}|\bigr)|h|+|\tilde{R}||A|+|\nabla S|+|\nabla q|\Bigr).
\end{equation}
This proves (\ref{A deriv}).

By \cite[eqn. (2.67)]{CLN} we have
\begin{align}
\frac{\partial}{\partial t}R^{\ell}_{ijk}&=g^{\ell m}\bigl(\nabla_i\nabla_kV_{jm}-\nabla_i\nabla_mV_{jk}-\nabla_j\nabla_kV_{im}+\nabla_j\nabla_mV_{ik}\bigr)\notag\\
&\qquad-g^{\ell m}\bigl(R_{ijk}^rV_{rm}+R_{ijm}^qV_{kq}\bigr)\notag\\
&=g^{\ell m}\bigl(-\nabla_i\nabla_kR_{jm}+\nabla_i\nabla_mR_{jk}+\nabla_j\nabla_kR_{im}-\nabla_j\nabla_mR_{ik}\bigr)\notag\\
&\qquad+g^{\ell m}\bigl(-g_{jm}\nabla_i\nabla_kp+g_{jk}\nabla_i\nabla_mp+g_{im}\nabla_j\nabla_kp-g_{ik}\nabla_j\nabla_mp\bigr)\notag\\
&\qquad+g^{\ell m}\bigl(R_{ijk}^rR_{rm}+R_{ijm}^rR_{kr}\bigr)-\frac{s_0}{n}g^{\ell m}\bigl(R_{ijk}^rg_{rm}+R_{ijm}^rg_{kr}\bigr)p\notag\\
&\qquad+g^{\ell m}\bigl(R_{ijk}^rg_{rm}+R_{ijm}^rg_{kr}\bigr)p.\label{R1}
\end{align}
Following the calculations in \cite[p. 119-120]{CLN} we have
\begin{align}
\Delta R^{\ell}_{ijk}&=g^{ab}\nabla_a\nabla_bR^{\ell}_{ijk}=g^{ab}\bigl(-\nabla_a\nabla_iR^{\ell}_{jbk}-\nabla_a\nabla_jR^{\ell}_{bik}\bigr)\notag\\
&=g^{ab}\bigl(-\nabla_i\nabla_aR^{\ell}_{jbk}+R^m_{aij}R^{\ell}_{mbk}+R^m_{aib}R^{\ell}_{jmk}+R^m_{aik}R^{\ell}_{jbm}-R^{\ell}_{aim}R^m_{jbk}\notag\\
&\qquad -\nabla_j\nabla_aR^{\ell}_{bik}+R^m_{ajb}R^{\ell}_{mik}+R^m_{aji}R^{\ell}_{bmk}+R^m_{ajk}R^{\ell}_{bim}-R^{\ell}_{ajm}R^m_{bik}\bigr)\notag\\
&=g^{\ell m}\bigl(-\nabla_i\nabla_kR_{jm}+\nabla_i\nabla_mR_{jk}+\nabla_j\nabla_kR_{im}-\nabla_j\nabla_mR_{ik}\bigr)\notag\\
&\qquad+g^{mr}\bigl(-R_{ir}R^{\ell}_{jmk}-R_{jr}R^{\ell}_{mik})\notag\\
&\qquad+g^{ab}\bigl(R^m_{aij}R^{\ell}_{mbk}+R^m_{aik}R^{\ell}_{jbm}-R^{\ell}_{aim}R^m_{jbk}\notag\\
&\qquad\qquad+R^m_{aji}R^{\ell}_{bmk}+R^m_{ajk}R^{\ell}_{bim}-R^{\ell}_{ajm}R^m_{bik}\bigr).\label{R2}
\end{align}
Combining (\ref{R1}) and (\ref{R2}) we have
\begin{align}
\frac{\partial}{\partial t}R^{\ell}_{ijk}&=\Delta R^{\ell}_{ijk}+g^{mr}\bigl(R_{ir}R^{\ell}_{jmk}+R_{jr}R^{\ell}_{mik}\bigr)\notag\\
&+g^{ab}\bigl(-R^m_{aij}R^{\ell}_{mbk}-R^m_{aik}R^{\ell}_{jbm}+R^{\ell}_{aim}R^m_{jbk}\notag\\
&\qquad-R^m_{aji}R^{\ell}_{bmk}-R^m_{ajk}R^{\ell}_{bim}+R^{\ell}_{ajm}R^m_{bik}\bigr)\notag\\
&+g^{\ell m}\bigl(-g_{jm}\nabla_i\nabla_kp+g_{jk}\nabla_i\nabla_mp+g_{im}\nabla_j\nabla_kp-g_{ik}\nabla_j\nabla_mp\bigr)\notag\\
&+g^{\ell m}\bigl(R_{ijk}^rR_{rm}+R_{ijm}^rR_{kr}\bigr)-\frac{s_0}{n}g^{\ell m}\bigl(R_{ijk}^rg_{rm}+R_{ijm}^rg_{kr}\bigr)\notag\\
&+g^{\ell m}\bigl(R_{ijk}^rg_{rm}+R_{ijm}^rg_{kr}\bigr)p.\label{R3}
\end{align}
Hence the evolution of $S$ is
\begin{align}
\frac{\partial}{\partial t}S^{\ell}_{ijk}&=\Delta R^{\ell}_{ijk}-\tilde{\Delta}\tilde{R}^{\ell}_{ijk}\notag\\
&+g^{mr}\bigl(R_{ir}R^{\ell}_{jmk}+R_{jr}R^{\ell}_{jmk}\bigr)-\tilde{g}^{mr}\bigl(\tilde{R}_{ir}\tilde{R}^{\ell}_{jmk}+\tilde{R}_{jr}\tilde{R}^{\ell}_{mik}\bigr)\notag\\
&+g^{ab}\bigl(-R^m_{aij}R^{\ell}_{mbk}-R^m_{aik}R^{\ell}_{jbm}+R^{\ell}_{aim}R^m_{jbk}\notag\\
&\qquad-R^m_{aji}R^{\ell}_{bmk}-R^m_{ajk}R^{\ell}_{bim}+R^{\ell}_{ajm}R^m_{bik}\bigr)\notag\\
&-\tilde{g}^{ab}\bigl(-\tilde{R}^m_{aij}\tilde{R}^{\ell}_{mbk}-\tilde{R}^m_{aik}\tilde{R}^{\ell}_{jbm}+\tilde{R}^{\ell}_{aim}\tilde{R}^m_{jbk}\notag\\
&\qquad-\tilde{R}^m_{aji}\tilde{R}^{\ell}_{bmk}-\tilde{R}^m_{ajk}\tilde{R}^{\ell}_{bim}+\tilde{R}^{\ell}_{ajm}\tilde{R}^m_{bik}\bigr)\notag\\
&+g^{\ell m}\bigl(-g_{jm}\nabla_i\nabla_kp+g_{jk}\nabla_i\nabla_mp+g_{im}\nabla_j\nabla_kp-g_{ik}\nabla_j\nabla_mp\bigr)\notag\\
&-\tilde{g}^{\ell m}\bigl(-\tilde{g}_{jm}\tilde{\nabla}_i\tilde{\nabla}_k\tilde{p}+\tilde{g}_{jk}\tilde{\nabla}_i\tilde{\nabla}_m\tilde{p}+\tilde{g}_{im}\tilde{\nabla}_j\tilde{\nabla}_k\tilde{p}-\tilde{g}_{ik}\tilde{\nabla}_j\tilde{\nabla}_m\tilde{p}\bigr)\notag\\
&+g^{\ell m}\bigl(R_{ijk}^rR_{rm}+R_{ijm}^rR_{kr}\bigr)-\tilde{g}^{\ell m}\bigl(\tilde{R}_{ijk}^r\tilde{R}_{rm}+\tilde{R}_{ijm}^r\tilde{R}_{kr}\bigr)\notag\\
&-\frac{s_0}{n}g^{\ell m}\bigl(R_{ijk}^rg_{rm}+R_{ijm}^rg_{kr}\bigr)+\frac{s_0}{n}\tilde{g}^{\ell m}\bigl(\tilde{R}_{ijk}^r\tilde{g}_{rm}+\tilde{R}_{ijm}^r\tilde{g}_{kr}\bigr)\notag\\
&+g^{\ell m}\bigl(R_{ijk}^rg_{rm}+R_{ijm}^rg_{kr}\bigr)p-\tilde{g}^{\ell m}\bigl(\tilde{R}_{ijk}^r\tilde{g}_{rm}+\tilde{R}_{ijm}^r\tilde{g}_{kr}\bigr)\tilde{p}.\label{R4}
\end{align}
Looking at the individual components, we see
\begin{align}
&\Delta R-\tilde{\Delta}\tilde{R}\notag\\
=&g^{ab}\nabla_a\nabla_bR-\tilde{g}^{ab}\tilde{\nabla}_a\tilde{\nabla}_b\tilde{R}\notag\\
=&\nabla_a(g^{ab}\nabla_bR)-\nabla_a(\tilde{g}^{ab}\tilde{\nabla}_b\tilde{R})+(\nabla_a-\tilde{\nabla}_a)(\tilde{g}^{ab}\tilde{\nabla}_b\tilde{R})\notag\\
=&\nabla_a\bigl(g^{ab}\nabla_bR-\tilde{g}^{ab}\tilde{\nabla}_b\tilde{R}\bigr)+\tilde{g}^{-1}\ast A\ast\tilde{\nabla}\tilde{R},\label{R5}
\end{align}
while
\begin{align}
&g^{-1}RR-\tilde{g}^{-1}\tilde{R}\tilde{R}\notag\\
=&(g^{-1}-\tilde{g}^{-1})(\tilde{R}\tilde{R})+g^{-1}(RR-\tilde{R}\tilde{R})\notag\\
=&\tilde{g}^{-1}\ast h\ast\tilde{R}\ast\tilde{R}+g^{-1}(R-\tilde{R})\tilde{R}+g^{-1}(RR-R\tilde{R})\notag\\
=&\tilde{g}^{-1}\ast h\ast\tilde{R}\ast\tilde{R}+S\ast\tilde{R}+S\ast R,\label{R6}
\end{align}
and
\begin{align}
&g^{-1}g\nabla\nabla p-\tilde{g}^{-1}\tilde{g}\tilde{\nabla}\tilde{\nabla}\tilde{p}\notag\\
=&(g^{-1}-\tilde{g}^{-1})\tilde{g}\tilde{\nabla}\tilde{\nabla}\tilde{p}+g^{-1}(g-\tilde{g})\tilde{\nabla}\tilde{\nabla}\tilde{p}+g^{-1}g(\nabla\nabla p-\tilde{\nabla}\tilde{\nabla}\tilde{p})\notag\\
=&\tilde{g}^{-1}\ast h\ast\tilde{g}\ast\tilde{\nabla}\tilde{\nabla}\tilde{p}+h\ast\tilde{\nabla}\tilde{\nabla}\tilde{p}+g^{-1}g(\nabla-\tilde{\nabla})(\tilde{\nabla}\tilde{p})+g^{-1}g(\nabla\nabla p-\nabla\tilde{\nabla}\tilde{p})\notag\\
=&\tilde{g}^{-1}\ast h\ast\tilde{g}\ast\tilde{\nabla}\tilde{\nabla}\tilde{p}+h\ast\tilde{\nabla}\tilde{\nabla}\tilde{p}+A\ast\tilde{\nabla}\tilde{p}+g^{-1}g\nabla(\nabla-\tilde{\nabla})\tilde{p}+g^{-1}g\nabla\nabla(p-\tilde{p})\notag\\
=&\tilde{g}^{-1}\ast h\ast\tilde{g}\ast\tilde{\nabla}\tilde{\nabla}\tilde{p}+h\ast\tilde{\nabla}\tilde{\nabla}\tilde{p}+A\ast\tilde{\nabla}\tilde{p}+h\ast A\ast\tilde{\nabla}\tilde{p}+C(\nabla\nabla q),\label{R7}
\end{align}
where in the last equality we used (\ref{fnablaf}). We also have
\begin{align}
&g^{-1}gR-\tilde{g}^{-1}\tilde{g}\tilde{R}\notag\\
=&(g^{-1}-\tilde{g}^{-1})\tilde{g}\tilde{R}+g^{-1}(g-\tilde{g})\tilde{R}+g^{-1}g(R-\tilde{R})\notag\\
=&\tilde{g}^{-1}\ast h\ast\tilde{g}\ast\tilde{R}+h\ast\tilde{R}+C(S),\label{R8}
\end{align}
and lastly
\begin{align}
&g^{-1}gRp-\tilde{g}^{-1}\tilde{g}\tilde{R}\tilde{p}\notag\\
=&(g^{-1}-\tilde{g}^{-1})\tilde{g}\tilde{R}\tilde{p}+g^{-1}(g-\tilde{g})\tilde{R}\tilde{p}+g^{-1}g(R-\tilde{R})\tilde{p}+g^{-1}gR(p-\tilde{p})\notag\\
=&\tilde{g}^{-1}\ast h\ast\tilde{g}\ast\tilde{R}\ast\tilde{p}+h\ast\tilde{R}\ast\tilde{p}+S\ast\tilde{p}+R\ast q.\label{R9}
\end{align}

Now by (\ref{R4}), (\ref{R5}), (\ref{R6}), (\ref{R7}), (\ref{R8}) and (\ref{R9}) we see
\begin{align*}
\frac{\partial}{\partial t}S&=\nabla_a\bigl(g^{ab}\nabla_bR-\tilde{g}^{ab}\tilde{\nabla}_b\tilde{R}\bigr)+\tilde{g}^{-1}\ast A\ast\tilde{\nabla}\tilde{R}+\tilde{g}^{-1}\ast h\ast\tilde{R}\ast\tilde{R}\\
&\qquad +S\ast\tilde{R}+S\ast R+\tilde{g}^{-1}\ast h\ast\tilde{g}\ast\tilde{\nabla}\tilde{\nabla}\tilde{p}+h\ast\tilde{\nabla}\tilde{\nabla}\tilde{p}+A\ast\tilde{\nabla}\tilde{p}\\
&\qquad+h\ast A\ast\tilde{\nabla}\tilde{p}+C(\nabla\nabla q)+\tilde{g}^{-1}\ast h\ast\tilde{g}\ast\tilde{R}+h\ast\tilde{R}+C(S)\\
&\qquad+\tilde{g}^{-1}\ast h\ast\tilde{g}\ast\tilde{R}\ast\tilde{p}+h\ast\tilde{R}\ast\tilde{p}+S\ast\tilde{p}+R\ast q.
\end{align*}
Hence by (\ref{U3}) we have
\begin{align}
&\left|\frac{\partial}{\partial t}S-\Delta S-\text{div }U\right|\notag\\
\leq &C\biggl(\Bigl(|\tilde{g}^{-1}||\tilde{R}|^2+|\tilde{g}^{-1}||\tilde{g}||\tilde{\nabla}\tilde{\nabla}\tilde{p}|+|\tilde{\nabla}\tilde{\nabla}\tilde{p}|\notag\\
&\qquad+|\tilde{g}^{-1}||\tilde{g}||\tilde{R}|+|\tilde{R}|+|\tilde{g}^{-1}||\tilde{g}||\tilde{R}||\tilde{p}|+|\tilde{R}||\tilde{p}|\Bigr)|h|\notag\\
&\qquad+\Bigl(|\tilde{g}^{-1}||\tilde{\nabla}\tilde{R}|+|\tilde{\nabla}\tilde{p}|+|h||\tilde{\nabla}\tilde{p}|\Bigr)|A|\notag\\
&\qquad+\Bigl(|\tilde{R}|+|R|+1+|\tilde{p}|\Bigr)|S|+|R||q|+|\nabla\nabla q|\biggr).\label{S2}
\end{align}
This proves (\ref{S deriv}).
\end{proof}
\begin{remark}
Upon closer observation we notice the following dependencies:
\begin{align*}
&N_h=N_h\bigl(n, s_0,|\tilde{p}|\bigr),\\
&N_A=N_A\bigl(n, s_0,|\tilde{g}|,|\tilde{g}^{-1}|,|\tilde{R}|,|\tilde{\nabla}\tilde{R}|,|\tilde{\nabla}\tilde{p}|\bigr),\\
&N_S=N_S\bigl(n, s_0,|\tilde{g}|,|\tilde{g}^{-1}|,|h|,|R|,|\tilde{R}|,|\tilde{\nabla}\tilde{R}|,|\tilde{p}|,|\tilde{\nabla}\tilde{p}|,|\tilde{\nabla}\tilde{\nabla}\tilde{p}|\bigr).
\end{align*}
$M$ is closed, so $M\times[0,T]$ is compact. Thus, given two metrics $g$ and $\tilde{g}$, all of these quantities will be bounded.
\end{remark}

\subsection{Bounds on $q$ and its Spacial Derivatives}

We turn our attention now to finding bounds on the differences between our pressure functions $p$ and $\tilde{p}$. We have the following proposition:

\begin{proposition}\label{Prop q bounds}
Let $\bigl(g(t),p(t)\bigr)$ and $\bigl(\tilde{g}(t),\tilde{p}(t)\bigr)$ be two solutions of (\ref{CRFequation}) on $M\times[0,T]$.
Then there exist constants $N_q$ and $\hat{N}_q$ such that
\begin{align}
\int_M|q|^2d\mu&\leq N_q\int_M\bigl(|h|^2+|A|^2+|S|^2\bigr)d\mu\label{q}\\
\int_M|\nabla q|^2d\mu&\leq N_q\int_M\bigl(|h|^2+|A|^2+|S|^2\bigr)d\mu\label{qq}\\
\int_M|\nabla\nabla q|^2d\mu&\leq \hat{N}_q\int_M\bigl(|h|^2+|A|^2+|S|^2\bigr)d\mu\label{qqq}
\end{align}
\end{proposition}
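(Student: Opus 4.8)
The plan is to collapse the two pressure equations in (\ref{equationp}) into a single second-order elliptic equation for $q=p-\tilde{p}$ measured against the background metric $g$, and then run $L^2$ energy estimates that exploit the coercivity supplied by the hypothesis $s_0<0$. First I would subtract the two equations. Writing $L_g=(n-1)\Delta_g+s_0$ and $L_{\tilde{g}}=(n-1)\tilde{\Delta}+s_0$, and setting $F_g=-|\text{Ric}_g-\tfrac{s_0}{n}g|^2$ with $F_{\tilde{g}}$ defined analogously, the two equations read $L_g p=F_g$ and $L_{\tilde{g}}\tilde{p}=F_{\tilde{g}}$. Since $L_{\tilde{g}}\tilde{p}=L_g\tilde{p}-(n-1)(\Delta_g-\tilde{\Delta})\tilde{p}$, subtracting gives
\[
L_g q=(n-1)\Delta_g q+s_0 q=\big(F_g-F_{\tilde{g}}\big)-(n-1)(\Delta_g-\tilde{\Delta})\tilde{p}=:\Phi.
\]
The aim of this step is the pointwise bound $|\Phi|\le N(|h|+|A|+|S|)$. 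Expanding $F_g-F_{\tilde{g}}$ by inserting and subtracting mixed terms exactly as in (\ref{R8}) produces factors of $g^{-1}-\tilde{g}^{-1}=\tilde{g}^{-1}\ast h$ and of $\text{Ric}_g-\text{Ric}_{\tilde{g}}=C(S)$, so $|F_g-F_{\tilde{g}}|\le N(|h|+|S|)$. For $(\Delta_g-\tilde{\Delta})\tilde{p}$ I would use Lemma \ref{Lem estimates}: the piece coming from $g^{-1}-\tilde{g}^{-1}$ contributes $\tilde{g}^{-1}\ast h\ast\tilde{\nabla}\tilde{\nabla}\tilde{p}$, while $g^{ij}(\nabla_i\nabla_j-\tilde{\nabla}_i\tilde{\nabla}_j)\tilde{p}$ is handled by (\ref{hnablaf}), (\ref{fnablaf}) and $(\nabla-\tilde{\nabla})(\tilde{\nabla}\tilde{p})=A\ast\tilde{\nabla}\tilde{p}$, giving $|(\Delta_g-\tilde{\Delta})\tilde{p}|\le N(|h|+|A|)$. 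Compactness of $M\times[0,T]$ keeps the coefficients $\tilde{g}^{\pm1}$, $\tilde{R}$, $\tilde{\nabla}\tilde{p}$, $\tilde{\nabla}\tilde{\nabla}\tilde{p}$ bounded, which licenses the constant $N$.

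With the equation in hand I would prove (\ref{q}) and (\ref{qq}) simultaneously by multiplying $L_g q=\Phi$ by $q$ and integrating. Since $\int_M q\,\Delta_g q\,d\mu=-\int_M|\nabla q|^2\,d\mu$ on the closed manifold, and writing $s_0=-|s_0|$, this yields
\[
(n-1)\int_M|\nabla q|^2\,d\mu+|s_0|\int_M q^2\,d\mu=-\int_M q\,\Phi\,d\mu.
\]
Absorbing the right-hand side by Young's inequality, $\int_M|q||\Phi|\le\tfrac{|s_0|}{2}\int_M q^2+\tfrac{1}{2|s_0|}\int_M\Phi^2$, leaves
\[
(n-1)\int_M|\nabla q|^2\,d\mu+\tfrac{|s_0|}{2}\int_M q^2\,d\mu\le\tfrac{1}{2|s_0|}\int_M\Phi^2\,d\mu,
\]
and since $\Phi^2\le 3N^2(|h|^2+|A|^2+|S|^2)$, both $\int_M q^2$ and $\int_M|\nabla q|^2$ are dominated by $N_q\int_M(|h|^2+|A|^2+|S|^2)$ with a single constant $N_q$.

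For (\ref{qqq}) I would pass to second order through the Bochner identity. On a closed manifold $\int_M\Delta_g|\nabla q|^2\,d\mu=0$, so integrating the Bochner formula gives
\[
\int_M|\nabla\nabla q|^2\,d\mu=\int_M(\Delta_g q)^2\,d\mu-\int_M\text{Ric}_g(\nabla q,\nabla q)\,d\mu.
\]
The first term is controlled since $\Delta_g q=\tfrac{1}{n-1}(\Phi-s_0 q)$, whence $\int_M(\Delta_g q)^2\le N\int_M(\Phi^2+q^2)$, and both pieces are already dominated by $\int_M(|h|^2+|A|^2+|S|^2)$ using the bound on $\Phi$ together with (\ref{q}); the Ricci term is bounded by $\sup_M|\text{Ric}_g|\int_M|\nabla q|^2$, which (\ref{qq}) controls. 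Collecting constants yields (\ref{qqq}) with $\hat{N}_q$ depending additionally on $\sup_M|\text{Ric}_g|$.

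The main obstacle is really lodged in the first step: the whole argument hinges on $s_0<0$, which makes $L_g=(n-1)\Delta_g+s_0$ negative definite (here $\Delta_g=g^{ij}\nabla_i\nabla_j$ is the nonpositive rough Laplacian used throughout the paper) and thus supplies the zeroth-order term $|s_0|\int_M q^2$ that Young's inequality can absorb into. Were $s_0\ge 0$ this term would vanish or reverse sign and the estimate would collapse. Once coercivity is secured, the only remaining labor is the careful $\ast$-calculus bookkeeping needed to reach $|\Phi|\le N(|h|+|A|+|S|)$, which is routine given Lemma \ref{Lem estimates} and compactness.
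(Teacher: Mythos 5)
Your derivation of the elliptic equation $L_gq=\Phi$ with $|\Phi|\le N\bigl(|h|+|A|+|S|\bigr)$, and your proof of (\ref{q}) and (\ref{qq}) by multiplying by $q$, integrating by parts, using $s_0<0$ for coercivity and absorbing via Young's inequality, coincide with the paper's own argument (its (\ref{q3}), (\ref{q4}), (\ref{Lqfunction}), (\ref{qbound1}) and (\ref{fbound})). Where you genuinely diverge is the second-derivative estimate (\ref{qqq}): the paper does not use the Bochner identity, but instead cites the interior $H^2$ regularity estimate for elliptic operators from Rauch, $|q|_{H^2(W)}\leq K\bigl(|Lq|_{L^2(M)}+|q|_{H^1(M)}\bigr)$, takes $W=M$ since $M$ is closed, squares, and combines with (\ref{q}), (\ref{qq}) and the pointwise bound on $Lq$. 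Your route --- integrating the Bochner formula to get
\begin{equation*}
\int_M|\nabla\nabla q|^2d\mu=\int_M(\Delta q)^2d\mu-\int_M\text{Ric}_g(\nabla q,\nabla q)\,d\mu,
\end{equation*}
then writing $\Delta q=\tfrac{1}{n-1}(\Phi-s_0q)$ and bounding the curvature term by $\sup_M|\text{Ric}_g|\int_M|\nabla q|^2d\mu$ --- is correct and arguably cleaner: it is self-contained, it yields an explicit constant $\hat{N}_q$ depending on $\sup_M|\text{Ric}_g|$ (finite by compactness of $M\times[0,T]$ and smoothness of $g$) rather than on an unquantified elliptic-regularity constant $K$, and it sidesteps the slightly awkward step in the paper of promoting an interior estimate to a global one by setting $W=M$. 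What the paper's approach buys in exchange is generality: invoking elliptic regularity needs no curvature identity and would apply verbatim to any uniformly elliptic second-order operator in place of $L$, whereas the Bochner route is tailored to the Laplace--Beltrami operator acting on functions.
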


\begin{proof}

We let $f$ represent any smooth function or tensor. In particular we will let $f$ be represented by the function $q$, the difference of the pressure functions. Since $M$ is compact we have
\begin{align*}
&\int_M\bigl((n-1)\Delta+s_0\bigr)(f)\cdot f~d\mu\\
=&s_0\int_M|f|^2d\mu-(n-1)\int_M\bigl<\nabla f,\nabla f\bigr>d\mu.
\end{align*}
Since $s_0<0$, taking the absolute value gives 
\begin{equation}\label{fbound}
\left|\int_M\bigl((n-1)\Delta+s_0)(f)\cdot fd\mu\right|= |s_0|\int_M|f|^2d\mu+(n-1)\int_M|\nabla f|^2d\mu
\end{equation}
 
Now we deal specifically with $p,~\tilde{p}$ and $q$. By (\ref{equationp}) we have the following equations for the pressure functions $p$ and $\tilde{p}$:
\begin{equation}\label{q1}
\bigl((n-1)\Delta+s_0\bigr)p=-\left<\text{Ric}-\frac{s_0}{n}g,\text{Ric}-\frac{s_0}{n}g\right>
\end{equation}
\begin{equation}\label{q2}
\bigl((n-1)\tilde{\Delta}+s_0\bigr)\tilde{p}=-\left<\tilde{\text{Ric}}-\frac{s_0}{n}\tilde{g},\tilde{\text{Ric}}-\frac{s_0}{n}\tilde{g}\right>.
\end{equation}
Now we calculate
\begin{align}
\Delta p-\tilde{\Delta}\tilde{p}&=g^{ab}\nabla_a\nabla_b p-\tilde{g}^{ab}\tilde{\nabla}_a\tilde{\nabla}_b\tilde{p}\notag\\
&=(g^{-1}-\tilde{g}^{-1})\tilde{\nabla}\tilde{\nabla}\tilde{p}+g^{-1}(\nabla-\tilde{\nabla})\tilde{\nabla}\tilde{p}+g^{-1}\nabla(\nabla-\tilde{\nabla})\tilde{p}+\Delta(p-\tilde{p})\notag\\
&=\tilde{g}^{-1}\ast h\ast\tilde{\nabla}\tilde{\nabla}\tilde{p}+A\ast\tilde{\nabla}\tilde{p}+h\ast A\ast\tilde{\nabla}\tilde{p}+\Delta q.\label{q3}
\end{align}
We also compute
\begin{align}
&-\left<\text{Ric}-\frac{s_0}{n}g,\text{Ric}-\frac{s_0}{n}g\right>+\left<\tilde{\text{Ric}}-\frac{s_0}{n}\tilde{g},\tilde{\text{Ric}}-\frac{s_0}{n}\tilde{g}\right>\notag\\
=&-\bigl(g^{ik}g^{j\ell}R_{ij}R_{k\ell}-\tilde{g}^{ik}\tilde{g}^{j\ell}\tilde{R}_{ij}\tilde{R}_{k\ell}\bigr)+2\frac{s_0}{n}\bigl(g^{ij}R_{ij}-\tilde{g}^{ij}\tilde{R}_{ij}\bigr)\notag\\
=&-(g^{-1}-\tilde{g}^{-1})\tilde{g}^{-1}\tilde{R}\tilde{R}-g^{-1}(g^{-1}-\tilde{g}^{-1})\tilde{R}\tilde{R}-g^{-1}g^{-1}(R-\tilde{R})\tilde{R}\notag\\
&\qquad-g^{-1}g^{-1}R(R-\tilde{R})+2\frac{s_0}{n}(g^{-1}-\tilde{g}^{-1})\tilde{R}+2\frac{s_0}{n}g^{-1}(R-\tilde{R})\notag\\
=&\tilde{g}^{-1}\ast\tilde{g}^{-1}\ast h\ast\tilde{R}\ast\tilde{R}+\tilde{g}^{-1}\ast h\ast\tilde{R}\ast\tilde{R}\notag\\
&\qquad+S\ast\tilde{R}+S\ast R+\tilde{g}^{-1}\ast h\ast\tilde{R} + C(S).\label{q4}
\end{align}

Combining (\ref{q1}), (\ref{q2}), (\ref{q3}) and (\ref{q4}), we see that $q$ satisfies the following Elliptic equation at each time $t\in[0,T]$:
\begin{align}
Lq&=\bigl((n-1)\Delta+s_0\bigr)(q)\notag\\
&=\tilde{g}^{-1}\ast h\ast\tilde{\nabla}\tilde{\nabla}\tilde{p}+A\ast\tilde{\nabla}\tilde{p}+h\ast A\ast\tilde{\nabla}\tilde{p}+\tilde{g}^{-1}\ast\tilde{g}^{-1}\ast h\ast\tilde{R}\ast\tilde{R}\notag\\
&\qquad+\tilde{g}^{-1}\ast h\ast\tilde{R}\ast\tilde{R}+S\ast\tilde{R}+S\ast R+\tilde{g}^{-1}\ast h\ast\tilde{R} + C(S)\label{Lqfunction}
\end{align}
Hence
\begin{equation}\label{qbound1}
|Lq|=\bigl|\bigl((n-1)\Delta+s_0\bigr)(q)\bigr|\leq N\bigl(|h|+|A|+|S|\bigr).
\end{equation}

To find estimates for $q$ and $\nabla q$, we combine (\ref{fbound}) and  (\ref{qbound1}):
\begin{align*}
&|s_0|\int_M|q|^2d\mu+(n-1)\int_M|\nabla q|^2d\mu\\
=&\left|\int_M\bigl((n-1)\Delta+s_0\bigr)(q)\cdot q~d\mu\right|\\
\leq&\int_MN\bigl(|h|+|A|+|S|\bigr)|q|~d\mu\\
\leq&\frac{|s_0|}{2}\int_M|q|^2d\mu+N\int_M\bigl(|h|^2+|A|^2+|S|^2\bigr)d\mu.
\end{align*}
Thus
\begin{equation*}
\frac{|s_0|}{2}\int_M|q|^2d\mu+(n-1)\int_M\bigl|\nabla q|^2d\mu\leq N\int_M\bigl(|h|^2+|A|^2+|S|^2\bigr)d\mu,
\end{equation*}
and we proved (\ref{q}) and (\ref{qq}).

To find an appropriate bound for $|\nabla\nabla q|$ we must turn to Interior Regularity Theory for Elliptic PDE. From (\ref{Lqfunction}) we see that $Lq=f$ is an Elliptic Equation. We then have the following estimate from \cite[p. 229]{Ra}.
\begin{equation*}
|q|_{H^2(W)}\leq K\bigl(|Lq|_{L^2(M)}+|q|_{H^1(M)}\bigr),
\end{equation*}
where $W$ is any compactly supported open subset of $M$ and $K$ depends only upon the coefficients of the operator $L$, the subset $W$ and the manifold $M$. Since $M$ is a closed manifold we may in fact choose $W=M$. Thus we have
\begin{equation}\label{IntEstimate2}
|q|_{H^2(M)}\leq K\bigl(|Lq|_{L^2(M)}+|q|_{H^1(M)}\bigr).
\end{equation}
Upon squaring both sides we observe
\begin{equation}\label{IntEstimate3}
\int_M|\nabla\nabla q|^2d\mu\leq|q|^2_{H^2(M)}\leq K^2\left(\int_M|Lq|^2d\mu+|q|_{H^1(M)}^2\right).
\end{equation}
Now (\ref{q}) and (\ref{qq}) imply that
\begin{equation}\label{IntEstimate4}
|q|^2_{H^1(M)}\leq N\int_M\bigl(|h|^2+|A|^2+|S|^2\bigr)d\mu.
\end{equation}
Combining (\ref{qbound1}), (\ref{IntEstimate3}) and (\ref{IntEstimate4}) we have
\begin{equation*}
\int_M|\nabla\nabla q|^2d\mu\leq N\int_M\bigl(|h|^2+|A|^2+|S|^2\bigr)d\mu,
\end{equation*}
and we proved (\ref{qqq}).

\end{proof}

\begin{remark}
We observe the following dependencies:
\begin{align*}
N_q&=N_q\bigl(n,s_0,|\tilde{g}^{-1}|,|h|,|R|,|\tilde{R}|,|\tilde{\nabla}\tilde{p}|,|\tilde{\nabla}\tilde{\nabla}\tilde{p}|\bigr)\\
\hat{N}_q&=\hat{N}_q\bigl(n,s_0,|\tilde{g}^{-1}|,|h|,|R|,|\tilde{R}|,|\tilde{\nabla}\tilde{p}|,|\tilde{\nabla}\tilde{\nabla}\tilde{p}|,K\bigr)\end{align*}
where $K$ is from (\ref{IntEstimate2}).
\end{remark}

\section{Energy Estimates}

Now we shall approximate the energy
\begin{equation}
\mathcal{E}(t)=\int_M\bigl(|h|^2+|A|^2+|S|^2\bigr)d\mu.
\end{equation}

We also define the following:
\begin{align}
\mathcal{H}(t) &=\int_M|h|^2d\mu\label{H}\\
\mathcal{A}(t) &=\int_M|A|^2d\mu\label{I}\\
\mathcal{S}(t) &=\int_M|S|^2d\mu\label{G}\\
\mathcal{D}(t) &=\int_M|\nabla S|^2d\mu\label{J}
\end{align}

Note that $\mathcal{E}(t)=\mathcal{H}(t)+\mathcal{A}(t)+\mathcal{S}(t)$. We now estimate the evolution of the energy functional under Conformal Ricci Flow, $\mathcal{E}'(t)$, by first estimating the evolutions of $\mathcal{H},~\mathcal{A}$ and $\mathcal{S}$.

\subsection{Evolution of $\mathcal{H}(t)$}

In \cite{LQZ}, Lu, Qing and Zheng give the evolution of the volume element under Conformal Ricci Flow:
\begin{equation}\label{VolEvol}
\frac{\partial}{\partial t}d\mu_{g(t)}=-n p(t)d\mu_{g(t)}
\end{equation}
Hence by (\ref{h deriv}) and (\ref{H}) we have
\begin{align*}
\mathcal{H}'(t)&\leq N\int_M|h|^2d\mu+\int_M2\left<\frac{\partial h}{\partial t},h\right>d\mu\\
&\leq N\mathcal{H}(t)+\int_M2|h|\biggl|\frac{\partial h}{\partial t}\biggr|d\mu\\
&\leq N\mathcal{H}(t)+N\int_M\bigl(|S||h|+|h|^2+|q||h|\bigr)d\mu.
\end{align*}
Now we know that $N\bigl(|S||h|+|q||h|\bigr)\leq N\bigl(|h|^2+|S|^2+|q|^2\bigr)$. Hence
\begin{align}
\mathcal{H}'(t)&\leq N\mathcal{H}(t)+N\int_M\bigl(|S|^2+|q|^2\bigr)d\mu\notag\\
&\leq N\mathcal{H}(t)+N\int_M\bigl(|S|^2+|h|^2+|A|^2\bigr)d\mu\notag\\
&\leq N\mathcal{H}(t)+N\mathcal{S}(t)+N\mathcal{A}(t)=N\mathcal{E}(t).\label{equationh}
\end{align}
 
\subsection{Evolution of $\mathcal{A}(t)$}

By (\ref{A deriv}), (\ref{I}) and (\ref{VolEvol}) we have
\begin{align*}
\mathcal{A}'(t)&\leq N\mathcal{A}(t)+\int_M2|A|\biggl|\frac{\partial A}{\partial t}\biggr|d\mu\\
&\leq N\mathcal{A}(t)+\int_M\Bigl(N|h||A|+N|A|^2+C|\nabla S||A|+C|\nabla q||A|\Bigr)d\mu.
\end{align*}
Now
\begin{equation*}
N|h||A|+C|\nabla S||A|+C|\nabla q||A|\leq N|h|^2+N|A|^2+|\nabla S|^2+|	\nabla q|^2.
\end{equation*}
hence we have that
\begin{align}
\mathcal{A}'(t)&\leq N\mathcal{A}(t)+\int_M\Bigl(N|h|^2+N|A|^2+|\nabla S|^2+|\nabla q|^2\Bigr)d\mu\notag\\
&\leq N\mathcal{A}(t)+N\mathcal{H}(t)+\mathcal{D}(t)+N\int_M\bigl(|h|^2+|A|^2+|S|^2\bigr)d\mu\notag\\
&\leq N\mathcal{A}(t)+N\mathcal{H}(t)+N\mathcal{S}(t)+\mathcal{D}(t)= N\mathcal{E}(t)+\mathcal{D}(t).\label{equationi}
\end{align}

\subsection{Evolution of $\mathcal{S}(t)$}

By (\ref{S deriv}), (\ref{G}) and (\ref{VolEvol}) we have
\begin{align*}
\mathcal{S}'(t)&\leq N\int_M|S|^2d\mu+\int_M2\biggl<\frac{\partial S}{\partial t},S\biggr>d\mu\\
&\leq N\mathcal{S}(t)+\int_M\Bigl(2\bigl<\Delta S+\text{div }V,S\bigr>\\
&\qquad+N\bigl(|h|+|A|+|S|+|q|\bigr)|S|+C|\nabla\nabla q||S|\Bigr)d\mu\\
&\leq N\mathcal{S}(t)+\int_M\Bigl(2\bigl<\Delta S+\text{div }V,S\bigr>\\
&\qquad+N\bigl(|h|^2+|A|^2+|S|^2+|q|^2+|\nabla\nabla q|^2\bigr)\Bigr)d\mu.
\end{align*}
Now by (\ref{q}) and (\ref{qqq}) we have
\begin{align*}
\mathcal{S}'(t)&\leq N\mathcal{S}(t)+N\mathcal{H}(t)+N\mathcal{A}(t)\\
&\qquad+\int_M\biggl(2\bigl<\Delta S+\text{div }V,S \bigr>+N\bigl(|A|^2+|S|^2+|h|^2\bigr)\biggr)d\mu\\
&\leq N\mathcal{S}(t)+N\mathcal{H}(t)+N\mathcal{A}(t)+\int_M2\bigl<\Delta S+\text{div }V,S \bigr>d\mu.
\end{align*}
Upon integrating by parts we get
\begin{align*}
\mathcal{S}'(t)&\leq N\mathcal{E}(t)-2\int_M\bigl<\nabla S+V,\nabla S\bigr>d\mu\notag\\
&\leq N\mathcal{E}(t)-2\int_M|\nabla S|^2d\mu+\int_M2|V||\nabla S|d\mu.
\end{align*}
Now we know that
\begin{equation*}
2|V||\nabla S|\leq |\nabla S|^2+|V|^2\leq|\nabla S|^2+N\bigl(|h|^2+|A|^2\bigr),
\end{equation*}
hence
\begin{equation}\label{equationg}
\mathcal{S}'(t)\leq N\mathcal{E}(t)+N\int_M\bigl(|h|^2+|A|^2\bigr)d\mu-\int_M|\nabla S|^2d\mu\leq N\mathcal{E}(t)-\mathcal{D}(t).
\end{equation}

\subsection{Proof of Main Theorem}

Now we are ready to prove Theorem 1:
\begin{proof}
By (\ref{equationg}), (\ref{equationh}) and (\ref{equationi}) we know that
\begin{equation*}
\mathcal{H}'(t)\leq N\mathcal{E}(t),~\mathcal{A}'(t)\leq N\mathcal{E}(t)+\mathcal{D}(t),~\mathcal{S}'(t)\leq N\mathcal{E}(t)-\mathcal{D}(t),
\end{equation*}
so
\begin{equation*}
\mathcal{E}'(t)\leq N\mathcal{E}(t).
\end{equation*}

Our initial condition $\tilde{g}(0)=g(0)$ tells us that at $t=0$ we have $|h|=|A|=|S|=0$. Therefore by the smoothness and integrability of our solutions we know
\begin{equation*}
\lim_{t\rightarrow 0^+}\mathcal{E}(t)=0,
\end{equation*}
so by Gronwall's Inequality we know that $\mathcal{E}\equiv 0$ on $[0,T]$. Thus for $t\in [0,T]$ we have that $h\equiv 0$ and $g(t)\equiv\tilde{g}(t)$. Also, $\mathcal{E}\equiv 0$ implies $A\equiv 0$ and $S\equiv 0$, so (\ref{q}) forces $q\equiv 0$. Thus $p(t)\equiv\tilde{p}(t)$. Therefore $\bigl(\tilde{g}(t),\tilde{p}(t)\bigr)=\bigl(g(t),p(t)\bigr),~t\in[0,T]$.\\
\end{proof}

\end{document}